\theoremstyle{plain}
\newtheorem{theorem}{Theorem}
\newtheorem{lemma}[theorem]{Lemma}
\newtheorem{proposition}[theorem]{Proposition}
\theoremstyle{definition}
\newtheorem{definition}[theorem]{Definition}
\newcommand{\cM}{\mathcal{M}}
\newcommand{\cX}{\mathcal{X}}
\newcommand{\cY}{\mathcal{Y}}
\newcommand{\vC}{\check{C}}
\newcommand{\cech}[1]{\vC(#1)}
\newcommand{\interior}[1]{%
{\kern0pt#1}^{\mathrm{o}}%
}
\DeclareMathOperator{\cHom}{\underline{\mathcal{H}\hspace{-0.1ex}om}}
\DeclareMathOperator{\Map}{Map}
\newcommand{\inhom}[2]{{#2}^{#1}}
\DeclareMathOperator{\Stack}{\bf{Stack}}
\begin{document}

\title{The smooth Hom-stack of an orbifold}

\author{David Michael Roberts
and Raymond F. Vozzo\footnote{This research was supported under the Australian Research Council's Discovery Projects funding scheme (project numbers DP120100106 and DP130102578). The authors thank Alexander Schmeding and Eugene Lerman for helpful discussion and suggestions. This document is released under a CC0 license \mbox{\url{http://creativecommons.org/publicdomain/zero/1.0/}}}}
\maketitle

\begin{abstract}

For a compact manifold $M$ and a differentiable stack $\cX$ presented by a Lie groupoid $X$, we show the Hom-stack $\cHom(M,\cX)$ is presented by a Fr\'echet--Lie groupoid $\Map(M,X)$ and so is an infinite-dimensional differentiable stack. We further show that if $\cX$ is an orbifold, presented by a proper \'etale Lie groupoid, then $\Map(M,X)$ is proper \'etale and so presents an infinite-dimensional orbifold.

{\bf Mathematics Subject Classification (2010).} Primary 22A22; Secondary 57R18, 58B25, 58D15, 14A20, 18F99.\\
%22A22 (Topological groupoids (including differentiable and Lie groupoids) ); 57R18 Topology and geometry of orbifolds; 58B25 (Group structures and generalizations on infinite-dimensional manifolds); 58D15 (Manifolds of mappings); 14A20 (Generalizations (algebraic spaces, stacks)); 18F99 (Categories and geometry)

\end{abstract}

This note serves to announce a generalisation of the authors' work \cite{Roberts-Vozzo}, which showed that the smooth loop stack of a differentiable stack is an infinite-dimensional differentiable stack, to more general mapping stacks where the source stack is a compact manifold (or more generally a compact manifold with corners).
We apply this construction to differentiable stacks that are smooth orbifolds, that is, they can be presented by proper \'etale Lie groupoids (see Definition \ref{def:proper_etale}).

Existing work on mapping spaces of orbifolds has been considered in the purely topological case \cite{CPRST_15,GH_06,Haefliger,Pronk-Scull}, 
the case of $C^k$ maps in \cite{Chen_06}, Sobolev maps in \cite{Weinmann} and smooth maps in \cite{Borzellino_Brunsden}. In all these cases, some sort of orbifold structure has been found (for instance, Banach or Fr\'echet orbifolds). However \cite{Schmeding_15} gives counterexamples showing that some of the constructions in \cite{Borzellino_Brunsden} are not well-defined.

Noohi \cite{Noohi_10} solved the problem of constructing a topological mapping stack between more general \emph{topological} stacks, when the source stack has a presentation by a compact topological groupoid.
See \cite{Roberts-Vozzo} for further references and discussion.

We take as given the definition of Lie groupoid in what follows, using finite-dimensional manifolds unless otherwise specified. 
Manifolds will be considered as trivial groupoids without comment.
We pause only to note that in the infinite-dimensional setting, the source and target maps of Fr\'echet--Lie groupoids must be submersions between Fr\'echet manifolds, which is a stronger hypothesis than asking the derivative is surjective (or even split) everywhere, as in the finite-dimensional or Banach case. 

We will also consider groupoids in diffeological spaces. 
Diffeological spaces (see for instance \cite{Baez-Hoffnung_11}) contain Fr\'echet manifolds as a full subcategory and admit all pullbacks (in fact all finite limits) and form a cartesian closed category such that for $K$ and $M$ smooth manifolds with $K$ compact, the diffeological mapping space $M^K$ is isomorphic to the Fr\'echet manifold of smooth maps $K \to M$.

Differentiable stacks are, for us, stacks of groupoids on the site $\cM$ of finite-dimensional smooth manifolds with the open cover topology that admit a presentation by a Lie groupoid \cite{Behrend-Xu_11}. 
We can also consider the more general notion of stacks that admit a presentation by a diffeological or Fr\'echet--Lie groupoid.

\begin{definition}
	
	Let $\cX,\cY$ be stacks on $\cM$. The \emph{Hom-stack} $\cHom(\cY,\cX)$ is defined by taking the value on the manifold $N$ to be $\Stack_\cM(\cY\times N,\cX)$.

\end{definition}

Thus we have a Hom-stack for any pair of stacks on $\mathcal{M}$.
The case we are interested in is where we have a differentiable stack $\cX$ associated to a Lie groupoid $X$, e.g.~an orbifold, and the resulting Hom-stack $\cHom(M,\cX)$ for $M$ a compact manifold.

We define a \emph{minimal cover} of a manifold $M$ to be a cover by regular closed sets $V_i$ such that the interiors $\interior{V_i}$ form an open cover of $M$, and every $\interior{V_i}$ contains a point not in any other $\interior{V_j}$. 
We also ask that finite intersections $V_i\cap\ldots\cap V_k$ are also regular closed.
Denote the collection of minimal covers of a manifold $M$ by $C(M)_{\min}$, and note that such covers are cofinal in open covers.  
Recall that a cover $V$ of a manifold defines a diffeological groupoid $\vC(V)$ with objects $\coprod_i V_i$ and arrows $\coprod_{i,j} V_i \cap V_j$.\footnote{We can in what follows safely ignore the issue of intersections of boundaries.} 
We are particularly interested in the case when we take the closure $\{\overline{U_i}\}$ of $\{U_i\}$, a good open cover, minimal in the above sense.

We denote the arrow groupoid of a Lie groupoid $X$ by $X^\mathbbm{2}$---it is again a Lie groupoid and comes with functors $S,T\colon X^\mathbbm{2} \to X$, with object components source and target, resp. Let $M$ be a compact manifold with corners and $X$ a Lie groupoid. 
Define the \emph{mapping groupoid} $\Map(M, X)$ to be the following diffeological groupoid. The object space $\Map(M, X)_0$ is the disjoint union over minimal covers $V$ of the spaces $\inhom{\vC(V)}{X}$ of functors $\vC(V) \to X$. 
The arrow space $\Map(M, X)_1$ is
\[
	\coprod_{V_1,V_2\in C(M)_\text{min}} \inhom{\cech{V_1}}{X} \times_{ \inhom{\cech{V_{12}}}{X}} 
	\inhom{\cech{V_{12}}}{(X^\mathbbm{2})}
	\times_{ \inhom{\cech{V_{12}}}{X}} \inhom{\cech{V_2}}{X} 
\]
where the chosen minimal refinement $V_{12}\subset V_1 \times_M V_2$ is defined using the boolean product on the algebra of regular closed sets. 
The maps
\begin{equation}\label{eq:mapping_gpd_construction_maps}
	S,T\colon \inhom{\cech{V_{12}}}{(X^\mathbbm{2})} \to \inhom{\cech{V_{12}}}{X}
	\qquad\text{and}\qquad
	 \inhom{\cech{V_i}}{X}\to  \inhom{\cech{V_{12}}}{X} \quad (i=1,2)
\end{equation}
give us a pullback and the two projections
\begin{equation}\label{eq:mapping_gpd_source_target}
	\inhom{\cech{V_1}}{X} \times_{ \inhom{\cech{V_{12}}}{X}} 
	\inhom{\cech{V_{12}}}{(X^\mathbbm{2})}
	\times_{ \inhom{\cech{V_{12}}}{X}} \inhom{\cech{V_2}}{X} 
	\longrightarrow \inhom{\cech{V_i}}{X},
\end{equation}
induce, for $i=1,2$, the source and target maps for our groupoid resp.
Composition in the groupoid is subtle, but is an adaptation of the composition of transformations of anafunctors given in \cite{Roberts_12}.
The proof of the following theorem works exactly as in Theorem 4.2 in \cite{Roberts-Vozzo}.

\begin{theorem}

	For $X$ a Lie groupoid and $M$ a compact manifold the Hom-stack $\cHom(M,X)$ is presented by the diffeological groupoid $\Map(M,X)$. \qed

\end{theorem}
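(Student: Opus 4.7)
The plan is to exhibit a morphism of stacks $\Phi\colon [\Map(M,X)] \to \cHom(M,\cX)$, where $[\Map(M,X)]$ denotes the stackification of the prestack of torsors for the diffeological groupoid $\Map(M,X)$, and then check it is fully faithful and essentially surjective as a morphism of stacks on the site $\cM$. Both stacks send a test manifold $N$ to a groupoid: the right-hand side is, by definition, $\Stack_\cM(M\times N,\cX)$, which is equivalent to the bicategory of anafunctors $\cech{W}\to X$ from Cech groupoids of open covers $W$ of $M\times N$, modulo refinement. On the left, an object over $N$ is, locally on $N$, a smooth map $N\to \inhom{\cech{V}}{X}$ for some minimal cover $V$ of $M$, i.e.\ a smooth family of functors $\cech{V}\to X$ parametrised by $N$, which is exactly an anafunctor $\cech{V\times N}\to X$ of the special ``product'' form. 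So the functor $\Phi$ is essentially tautological: send a family of functors $\cech{V}\to X$ parametrised by $N$ to the corresponding anafunctor $\cech{V\times N}\to X$, and send an arrow in $\Map(M,X)_1$ to the evident transformation of anafunctors built from the $X^{\mathbbm{2}}$-component.

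The first substantive step is essential surjectivity. Given an anafunctor $\cech{W}\to X$ representing a map $M\times N\to \cX$, I would argue that locally on $N$ one can refine $W$ to a product cover of the form $V\times U$ where $V$ is a minimal cover of $M$ and $U$ is an open of $N$. This uses compactness of $M$: for each $n_0\in N$, cover the compact slice $M\times\{n_0\}$ by finitely many elements of $W$, tube each of these over a small enough neighbourhood $U$ of $n_0$, and then refine the induced cover of $M$ by a minimal cover $V$ (cofinality of minimal covers in open covers was noted in the excerpt). The restricted anafunctor then descends to an element of $\inhom{\cech{V}}{X}$ smoothly varying with $n\in U$, i.e.\ to an object of $\Map(M,X)$ over $U$. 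This is exactly the local liftability property needed to conclude essential surjectivity after stackification.

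Next is full faithfulness. Given two objects over $N$ corresponding to smooth families of functors $F_i\colon \cech{V_i}\to X$ ($i=1,2$), a morphism between their images in $\cHom(M,\cX)$ is a transformation of the resulting anafunctors, i.e.\ a map into $X^{\mathbbm{2}}$ defined on a common refinement of the covers $V_1\times N$ and $V_2\times N$. Again using compactness of $M$ and cofinality of minimal covers, and the Boolean-product construction of the canonical minimal refinement $V_{12}\subset V_1\times_M V_2$, I would argue that after localising on $N$ such a transformation is represented by a map into $\inhom{\cech{V_{12}}}{(X^{\mathbbm{2}})}$ with the right boundary behaviour---that is, precisely an arrow in $\Map(M,X)_1$. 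Conversely two such arrows induce the same transformation of anafunctors iff they agree after further minimal refinement, which is exactly the equivalence relation built into the stackification of $[\Map(M,X)]$. This gives bijectivity on Hom-groupoids and completes fully faithfulness.

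The main obstacle, as in Theorem 4.2 of \cite{Roberts-Vozzo}, is the compactness-based ``product cover'' reduction underlying essential surjectivity: one has to produce, smoothly in $n\in N$, a minimal cover of $M$ trivialising a given anafunctor $M\times N\to \cX$, and do so in a way compatible with the transformation structure used to define $\Map(M,X)_1$. The rest---verifying that the source, target, and composition defined via the anafunctor-transformation calculus of \cite{Roberts_12} match those on $[\Map(M,X)]$, and that the Boolean-product refinement $V_{12}$ behaves functorially---is bookkeeping that proceeds exactly as in the loop-stack case.
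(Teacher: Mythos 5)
Your proposal follows essentially the same route as the paper, which gives no argument of its own here but defers entirely to Theorem 4.2 of \cite{Roberts-Vozzo}: that proof is precisely the Noohi-style argument you outline, with essential surjectivity obtained by using compactness of $M$ (the tube lemma) to refine an arbitrary cover of $M\times N$ locally on $N$ to a product of a minimal cover of $M$ with an open subset of $N$, and full faithfulness obtained from common minimal refinements and the anafunctor-transformation calculus of \cite{Roberts_12}. Your outline is a correct sketch of that strategy.
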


We need some results that ensure the above constructions give Fr\'echet manifolds.

\begin{proposition}\label{restriction_is_subm}
	For $M$ a compact smooth Riemannian manifold (possibly with corners), $K$ a compact regular closed Lipschitz subset of $M$ and $N$ a smooth manifold, the induced restriction map $N^M \to N^K$ is a submersion of Fr\'echet manifolds.
\end{proposition}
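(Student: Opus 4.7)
The plan is to reduce the statement to a linear extension problem by means of exponential charts on the mapping spaces, and then invoke a Stein-type extension theorem for Lipschitz domains to exhibit local smooth sections of the restriction map.

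Fix $f \in N^M$ and set $g = f|_K \in N^K$. Choose any Riemannian metric on $N$, with exponential map $\exp$ defined on an open neighbourhood of the zero section of $TN$. Using compactness of $M$ and hence of $K$, the formulas
\[
\Phi(v)(x) = \exp_{f(x)}(v(x)), \qquad \Psi(w)(x) = \exp_{g(x)}(w(x))
\]
define smooth charts on $N^M$ around $f$ and on $N^K$ around $g$, modelled on the Fr\'echet spaces $\Gamma(f^*TN)$ and $\Gamma(f^*TN|_K)$ respectively (equipped with their $C^\infty$-topology). In these charts the restriction map becomes the linear restriction of sections $r\colon \Gamma(f^*TN) \to \Gamma(f^*TN|_K)$. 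It therefore suffices to produce a continuous linear right inverse $E$ of $r$, since $\Phi \circ E \circ \Psi^{-1}$ is then a smooth local section of $N^M \to N^K$ through $f$, witnessing the submersion property.

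To construct $E$ I would invoke Stein's total extension theorem, which for every compact Lipschitz subset $L \subset \mathbb{R}^n$ provides a single linear operator $C^\infty(L) \to C^\infty(\mathbb{R}^n)$ bounded in every $C^k$-norm, hence continuous between the corresponding Fr\'echet spaces. Cover $M$ by finitely many coordinate charts $(W_\alpha,\varphi_\alpha)$ over which $f^*TN$ is trivial and for which each $\varphi_\alpha(K\cap W_\alpha)$ is a compact Lipschitz subset of a half-space in $\mathbb{R}^n$; this is possible because the Lipschitz property of $K$ is preserved under smooth diffeomorphisms. Applying Stein's operator componentwise in each chart, patching with a smooth partition of unity subordinate to $\{W_\alpha\}$, and using a cutoff near $\partial M$ to stay inside the charts, yields a continuous linear extension $E\colon \Gamma(f^*TN|_K) \to \Gamma(f^*TN)$.

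The main obstacle is precisely this extension step together with its interaction with the corners of $M$: one must verify that the Lipschitz hypothesis on $K$ is preserved when transported to local Euclidean coordinates, especially where $K$ meets $\partial M$ or a codimension-two corner stratum, and that the patched operator remains continuous in each $C^k$-seminorm. Routine issues, such as checking that the exponential-map charts are diffeomorphisms of open subsets of Fr\'echet manifolds and that $E$ commutes up to the expected compatibility with $r$, follow standard manifold-of-mappings technology and do not require new ideas. Once the extension is in hand, the section $\Phi \circ E \circ \Psi^{-1}$ exists near every $g \in N^K$, so $N^M \to N^K$ is a submersion of Fr\'echet manifolds.
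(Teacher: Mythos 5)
Your argument follows the paper's proof essentially step for step: both pass to section-space charts in which the restriction map becomes linear, reduce to a Euclidean extension problem for the compact Lipschitz set inside a large ball, and conclude by citing a continuous linear extension operator (the paper uses Frerick's Theorem 3.15 on Whitney jets where you invoke Stein's total extension theorem, but the two play the identical role). The only point to make explicit is that the continuous linear right inverse $E$ of the linear chart representative $r$ is exactly what exhibits $r$ as a projection out of a direct summand, which is the strong notion of Fr\'echet submersion the paper requires; the existence of smooth local sections alone would be a weaker conclusion.
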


\begin{proof}
Recall that a submersion of Fr\'echet manifolds is a smooth map that is locally, for suitable choices of charts, a projection out of a direct summand. 
This means we have to work locally in charts and show that we have a split surjection of Fr\'echet spaces. 
We can reduce this to the case that $N=\mathbb{R}^n$, since the charts are given by spaces of sections of certain vector bundles, and we can consider these spaces locally and patch them together, and thence to $N=\mathbb{R}$. 
The proof then uses \cite[Theorem 3.15]{Frerick}, as we can work in charts bi-Lipschitz to flat $\mathbb{R}^n$, hence reduce to the case of $K\subset \overline{B}\subset \mathbb{R}^n$, for $B$ some large open ball. \end{proof}

In particular this is true for sets $K$ that are closures of open geodesically convex sets, and even more specifically such open sets that are the finite intersections of geodesically convex charts in a good open cover.
We also use a special case of Stacey's theorem \cite[Corollary 5.2]{Stacey_13}; smooth manifolds with corners are smoothly $\mathfrak{T}$-compact spaces in Stacey's sense. Another proof, not using the technology of generalised smooth spaces, is given in \cite[Lemma 2.4]{Amiri-Schmeding}.

\begin{theorem}[Stacey]\label{Staceys_thm}
	Let $N_1 \to N_2$ be a submersion of finite-dimensional manifolds and $K$ a compact manifold, possibly with corners.
	Then the induced map of Fr\'echet manifolds $N_1^K \to N_2^K$ is a submersion. \qed
\end{theorem}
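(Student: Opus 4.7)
The plan is to exhibit, around an arbitrary $f_1 \in N_1^K$ with image $f_2 := p \circ f_1 \in N_2^K$, charts in which the induced map $p_*$ becomes a continuous linear projection of Fr\'echet spaces out of a direct summand; this is precisely the characterisation of a submersion of Fr\'echet manifolds recalled in the proof of Proposition \ref{restriction_is_subm}.

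To build the charts, endow $N_2$ with an arbitrary Riemannian metric, pick a horizontal distribution $H \subset TN_1$ complementary to the vertical bundle $V := \ker dp$, and extend the metric to $N_1$ so that $p$ is a Riemannian submersion. From this data one constructs an \emph{adapted exponential} $\exp^p$, defined on a neighborhood of the zero section of $TN_1$, satisfying the intertwining identity
\[
p \circ \exp^p(v) = \exp^{N_2}(dp(v)) \qquad \text{for all } v \in TN_1,
\]
by horizontally lifting the $N_2$-geodesic in the direction $dp(v)$ and then flowing by the fibre exponential in the vertical direction (parallel transported along the lift). Pointwise application gives charts $\phi_1 : U_1 \subset \Gamma(K, f_1^* TN_1) \to N_1^K$ and $\phi_2 : U_2 \subset \Gamma(K, f_2^* TN_2) \to N_2^K$ around $f_1$ and $f_2$; compactness of $K$ makes these the usual Fr\'echet charts on spaces of smooth maps. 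By the intertwining identity, $p_*$ in these charts is simply postcomposition with the smooth bundle map $dp : f_1^* TN_1 \to f_2^* TN_2$.

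Since $p$ is a submersion, this bundle map is a surjective morphism of smooth vector bundles over the compact base $K$, with kernel the smooth subbundle $f_1^* V$. A choice of fibre metric gives a smooth splitting, so $f_1^* TN_1 \cong f_1^* V \oplus f_2^* TN_2$ over $K$. Passing to global sections yields a topological direct sum decomposition $\Gamma(K, f_1^* TN_1) \cong \Gamma(K, f_1^* V) \oplus \Gamma(K, f_2^* TN_2)$ in which the map induced by $dp$ is projection onto the second summand. This exhibits $p_*$ locally as a projection out of a direct summand, and hence as a Fr\'echet submersion at $f_1$.

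The main obstacle is Step~2: arranging the adapted exponential $\exp^p$ so that the intertwining identity holds globally (not merely infinitesimally) and that pointwise application genuinely yields a chart on $N_1^K$ equivalent to the standard one coming from any auxiliary metric. This is a technical point requiring care with Ehresmann connections and the behaviour of horizontal lifts of geodesics; once it is in place, the rest of the proof reduces to standard linear algebra for smooth vector bundles over a compact base, in the spirit of the final step of Proposition \ref{restriction_is_subm}.
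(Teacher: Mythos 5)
The paper does not actually prove this statement: it is quoted as Stacey's theorem with a \qed, citing \cite[Corollary 5.2]{Stacey_13} (whose proof runs through the machinery of generalised smooth spaces) and noting that \cite[Lemma 2.4]{Amiri-Schmeding} gives another proof avoiding that technology. Your argument is, in essence, that second proof: one builds a local addition on $N_1$ adapted to the submersion $p$, so that in the induced charts $p_*$ becomes postcomposition with $Tp$, and then splits $\Gamma(K, f_1^*TN_1) \cong \Gamma(K, f_1^*\ker dp)\oplus \Gamma(K,f_2^*TN_2)$ using a horizontal complement; this is correct, and the final splitting step is indeed just smooth linear algebra over the compact base $K$. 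What the paper's route buys is generality and economy (Stacey's result applies to a broad class of source spaces and test categories at once); what your route buys is a self-contained, elementary argument in the spirit of Proposition \ref{restriction_is_subm}. The technical point you flag as the main obstacle---that the adapted exponential satisfies the intertwining identity globally and yields a chart equivalent to the standard one---is exactly the content of the adapted local addition construction in \cite{Amiri-Schmeding}, so it is standard but does need the checks you list. Two small caveats: the identity $p\circ\exp^p(v)=\exp^{N_2}(dp(v))$ can only be asserted for $v$ in the (open) domain of $\exp^p$, not all of $TN_1$, since horizontal lifts of geodesics need not exist for time one in general (the Ehresmann connection need not be complete); this is harmless because $f_1(K)$ is compact, so a uniform neighbourhood of the zero section over $f_1(K)$ exists, and charts on $N_1^K$ only use the local addition there. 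Also, one must verify that $(\pi,\exp^p)$ is a diffeomorphism near the zero section (its fibrewise derivative at $0$ is the identity by construction), so that $\phi_1$ really is a chart for the standard Fr\'echet structure; with that, your proof is complete.
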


The following proposition is the main technical tool in proving the mapping stack is an infinite-dimensional differentiable stack.

\begin{proposition}\label{prop:space_of_functors_is_mfld}

	The diffeological space $\inhom{\vC(V)}{X}$ is a Fr\'echet manifold.

\end{proposition}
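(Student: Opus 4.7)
The plan is to realise $\inhom{\vC(V)}{X}$ as a finite iterated fibered product of Fr\'echet manifolds built from $X_0^{V_I}$, $X_1^{V_I}$, and $(X_1\times_{X_0}X_1)^{V_I}$, where $V_I$ ranges over the finite intersections of the (finitely many) covering sets $V_i$. At each stage the pullback will be taken along a submersion, supplied either by Proposition \ref{restriction_is_subm} (restriction maps) or by Theorem \ref{Staceys_thm} (post-composition with a submersion of finite-dimensional manifolds). Since pullbacks along submersions exist in the category of Fr\'echet manifolds, the result will carry a Fr\'echet manifold structure. A functor $\phi\colon\vC(V)\to X$ is determined by an object component $(f_i)$ with $f_i\in X_0^{V_i}$ together with an arrow component $(g_{ij})_{i<j}$ with $g_{ij}\in X_1^{V_{ij}}$ (the remaining arrows $g_{ii}$ and $g_{ji}$ being fixed by the unit and inverse), subject to (a) the functoriality conditions $s\circ g_{ij}=f_i|_{V_{ij}}$ and $t\circ g_{ij}=f_j|_{V_{ij}}$ and (b) the cocycle condition $g_{jk}|_{V_{ijk}}\cdot g_{ij}|_{V_{ijk}}=g_{ik}|_{V_{ijk}}$ for each triple $i<j<k$.

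First I would impose condition (a). Starting from the finite product $\prod_i X_0^{V_i}$, for each pair $i<j$ in turn I would form successive pullbacks. The map $s_*\colon X_1^{V_{ij}}\to X_0^{V_{ij}}$ is a submersion by Theorem \ref{Staceys_thm} applied to the source submersion $s\colon X_1\to X_0$, while the restriction $X_0^{V_i}\to X_0^{V_{ij}}$ is a submersion by Proposition \ref{restriction_is_subm}; their pullback records the source constraint on $g_{ij}$. Repeating with $t_*$ and restriction from $X_0^{V_j}$ records the target constraint. Iterating over all pairs $i<j$ yields a Fr\'echet manifold $P$ parametrising tuples $((f_i),(g_{ij}))$ satisfying (a).

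The main obstacle is the cocycle condition (b). To handle each triple $i<j<k$, I introduce the Fr\'echet manifold $X^{(2)}:=X_1\times_{s,X_0,t}X_1$ of composable arrows together with its three face maps $\delta_0,\delta_1,\delta_2\colon X^{(2)}\to X_1$ (the two projections and the multiplication). Each $\delta_\ell$ admits a smooth section by insertion of an identity arrow using the unit $X_0\to X_1$, and is therefore a submersion of finite-dimensional manifolds; by Theorem \ref{Staceys_thm} each $(\delta_\ell)_*\colon(X^{(2)})^{V_{ijk}}\to X_1^{V_{ijk}}$ is a submersion. The cocycle condition on a triple is equivalent to the existence of a unique lift $h_{ijk}\colon V_{ijk}\to X^{(2)}$ whose three faces recover $g_{ij}|_{V_{ijk}}$, $g_{jk}|_{V_{ijk}}$, and $g_{ik}|_{V_{ijk}}$. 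I would encode this as a pullback of $P$ with $(X^{(2)})^{V_{ijk}}$ over $(X_1^{V_{ijk}})^3$, where the map from $P$ is the triple of restrictions (a submersion by Proposition \ref{restriction_is_subm} applied componentwise, composed with projection from the iterated pullback $P$) and the map from $(X^{(2)})^{V_{ijk}}$ is $((\delta_0)_*,(\delta_1)_*,(\delta_2)_*)$. Performing this pullback once per triple yields $\inhom{\vC(V)}{X}$ as a Fr\'echet manifold. The principal technical care will be checking that at each stage the relevant composite maps remain submersions, so that the chain of pullbacks genuinely stays in the Fr\'echet category and that the resulting manifold structure agrees with the ambient diffeology.
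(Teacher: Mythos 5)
Your overall strategy is the same as the paper's: the reduction to ordered data $(f_i)$, $(g_{ij})_{i<j}$ is exactly the paper's passage to semisimplicial maps out of the nerve of the irreflexive partial order $\vC^<(V)$, the observation that only the triple-intersection (cocycle) condition remains is the paper's appeal to $X$ being $2$-coskeletal, and the plan to realise the limit as an iterated pullback along submersions supplied by Proposition \ref{restriction_is_subm} and Theorem \ref{Staceys_thm} is precisely what the paper's proof sketch asserts. The first stage of your construction (imposing the source/target constraints) is correct and is carried out along genuine submersions.

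The gap is in your encoding of the cocycle condition. The map $\bigl((\delta_0)_*,(\delta_1)_*,(\delta_2)_*\bigr)\colon (X^{(2)})^{V_{ijk}}\to (X_1^{V_{ijk}})^3$ is \emph{not} a submersion: already at the finite-dimensional level $(\delta_0,\delta_1,\delta_2)\colon X_1\times_{X_0}X_1\to X_1^3$ has source of dimension $2\dim X_1-\dim X_0$ and target of dimension $3\dim X_1$, so it is (generically) an embedding of positive codimension, and passing to mapping spaces does not improve this. (Relatedly, ``admits a smooth section, therefore is a submersion'' is not a valid inference --- a global section only gives surjectivity of the derivative along its image --- although each $\delta_\ell$ is in fact a submersion for other reasons.) The other leg of your square, the triple of restrictions out of $P$, is not a submersion onto $(X_1^{V_{ijk}})^3$ either, since its image is constrained by the source/target compatibilities already built into $P$. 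So neither leg of the proposed pullback is a submersion, and the cited results do not give the fibre product a Fr\'echet manifold structure; set-theoretically your square does compute the right space (the lift $h_{ijk}$ is determined by its $\delta_0$ and $\delta_2$ faces, and the $\delta_1$ condition is the cocycle identity), but that reduces it to an \emph{equaliser} of two maps $P\to X_1^{V_{ijk}}$, which is a pullback along a diagonal, not along a submersion. What is actually needed is a reorganisation in which the datum $g_{ik}$ is adjoined last, as a point of $X_1^{V_{ik}}$ whose restriction to $V_{ijk}$ and whose source and target are simultaneously prescribed; i.e.\ one must show that the \emph{mixed} map $X_1^{V_{ik}}\to X_0^{V_{ik}}\times X_0^{V_{ik}}\times_{(\cdots)} X_1^{V_{ijk}}$ combining post-composition with $(s,t)$ and restriction to a regular closed subset is a submersion onto the relevant fibred product. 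This is a genuinely relative statement interpolating between Proposition \ref{restriction_is_subm} and Theorem \ref{Staceys_thm} (in the spirit of Frerick's extension operators applied fibrewise), and it is the missing ingredient your proposal defers to ``technical care''; without it the final step of your construction does not go through as written.
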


\begin{proof}
First, the diffeological space of functors is isomorphic to the space of simplicial maps $N\vC(V) \to NX$ between the nerves of the groupoids. 
Then, since the subspaces of degenerate simplicies in $N\vC(V)$ are disjoint summands, we can remove those, and consider semi-simplicial maps between semisimplicial diffeological spaces instead. 
Then, since inverses in $\vC(V)$ are also disjoint, we can remove those as well, and consider the diffeological space of semisimplicial maps from the `nerve' of the smooth irreflexive partial order $\vC^<(V)$ to the nerve of $X$, considered as a semisimplicial space (where we have chosen an arbitrary total ordering on the finite minimal cover $V$). 
\emph{This} diffeological space is what we show is a Fr\'echet manifold, by carefully writing the limit as an iterated pullback of diagrams involving maps that are guaranteed to be submersions by Proposition \ref{restriction_is_subm} and Theorem \ref{Staceys_thm}, and using the fact that $X$ is appropriately coskeletal, i.e.~$(NX)_n = X_1 \times_{X_0}\ldots \times_{X_0} X_1$. 
The original space of functors is then a diffeological space isomorphic to this Fr\'echet manifold, hence is a Fr\'echet manifold.
\end{proof}

\begin{lemma}\label{lemma:induced_maps_between_hom_spaces_submersions}
	Let $X \to Y$ be a functor between Lie groupoids with object and arrow components submersions, and $V_1 \to V_2$ a refinement of minimal covers.
	Then the induced map $\inhom{\cech{V_2}}{X} \to \inhom{\cech{V_1}}{Y}$ is a submersion between Fr\'echet manifolds. \qed
\end{lemma}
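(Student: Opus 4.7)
My plan is to factor the induced map as a composition of two maps, each of which I will show is a submersion, and then invoke the fact that compositions of submersions are submersions. Specifically, I write
\[
\inhom{\cech{V_2}}{X} \xrightarrow{\;\phi\;} \inhom{\cech{V_2}}{Y} \xrightarrow{\;\psi\;} \inhom{\cech{V_1}}{Y},
\]
where $\phi$ is post-composition with the functor $X \to Y$, and $\psi$ is pre-composition with the refinement functor $\cech{V_1} \to \cech{V_2}$ (arising from the inclusions $V_{1,i} \subset V_{2,\rho(i)}$ that define the refinement $V_1 \to V_2$). All three spaces are Fr\'echet manifolds by Proposition \ref{prop:space_of_functors_is_mfld}, so it suffices to treat $\phi$ and $\psi$ separately.

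For $\phi$: the object and arrow components of $X \to Y$ are submersions by hypothesis, and consequently so are the induced maps $X_n \to Y_n$ on each simplicial level, since $X_n$ and $Y_n$ are built by iterated pullback along the Lie-groupoid source and target maps (themselves submersions), and a pullback of submersions along submersions is again a submersion. Applying Theorem \ref{Staceys_thm} with $K$ equal to each finite intersection $V_{2,i_0\cdots i_n}$ (a compact manifold with corners, since the minimal cover is taken subordinate to a good open cover) yields submersions $X_n^{V_{2,i_0\cdots i_n}} \to Y_n^{V_{2,i_0\cdots i_n}}$ at every simplicial level and every index tuple.

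For $\psi$: the refinement supplies inclusions $V_{1,j_0\cdots j_n} \subset V_{2,\rho(j_0)\cdots \rho(j_n)}$ of compact regular closed subsets, and Proposition \ref{restriction_is_subm} makes each restriction $Y_n^{V_{2,\rho(j_0)\cdots\rho(j_n)}} \to Y_n^{V_{1,j_0\cdots j_n}}$ a submersion of Fr\'echet manifolds.

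Using the iterated-pullback presentation of $\inhom{\cech{V}}{X}$ from the proof of Proposition \ref{prop:space_of_functors_is_mfld}, both $\phi$ and $\psi$ appear as morphisms of iterated pullbacks of Fr\'echet manifolds whose component maps are all submersions. The only remaining technical point, and in my view the hardest part, is the general fact that such a morphism of iterated pullbacks is itself a submersion. This reduces inductively to the single-pullback case: given a commuting cube of Fr\'echet manifolds whose top and bottom faces are pullback squares formed from submersions and whose four vertical maps are also submersions, the induced map between the two pullbacks is a submersion, because locally a submersion is the projection off a direct summand and this local product form is preserved by pullback. This is precisely the mechanism already implicit in Proposition \ref{prop:space_of_functors_is_mfld}, so no tools beyond those assembled above are needed.
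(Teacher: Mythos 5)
The levelwise inputs in your argument are fine: the factorization through $\inhom{\cech{V_2}}{Y}$ is natural, Theorem \ref{Staceys_thm} does give submersions $X_n^{K} \to Y_n^{K}$, and Proposition \ref{restriction_is_subm} does give submersions $Y_n^{V_{2,\rho(j_0)\cdots\rho(j_n)}} \to Y_n^{V_{1,j_0\cdots j_n}}$. The problem is the step you single out as the hardest part and then assert without proof: it is false that a map of pullback squares whose three vertical components are submersions, and whose two faces are pullbacks of submersions, induces a submersion on pullbacks. Take the top face to be $\mathbb{R} \xrightarrow{\;\id\;} \mathbb{R} \xleftarrow{\;\id\;} \mathbb{R}$, with pullback $\mathbb{R}\times_{\mathbb{R}}\mathbb{R}\cong\mathbb{R}$, the bottom face to be $\mathbb{R}\to \ast \leftarrow \mathbb{R}$, with pullback $\mathbb{R}^2$, and the vertical maps to be $\id$, $\id$ and $\mathbb{R}\to\ast$; every map in sight is a submersion, yet the induced map is the diagonal $\mathbb{R}\to\mathbb{R}^2$, which is not. (Proposition \ref{prop:space_of_functors_is_mfld} only uses the true statement that a pullback \emph{of} a submersion along a map is a submersion; it neither uses nor implies the cube statement.) So your last paragraph is not a missing detail but a wrong claim, and the proof does not go through.

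Moreover, the failure is not an artifact of excessive generality in your intermediate lemma: the diagonal-type degeneration occurs in the Hom-spaces themselves. Take $X=Y$ to be the pair groupoid of $\mathbb{R}$ (a Lie groupoid, with the identity functor having submersion components), $M=S^1$, $V_2=\{S^1\}$ and $V_1=\{A,B\}$ two overlapping closed arcs refining it. Then $\inhom{\cech{V_2}}{X}\cong C^\infty(S^1)$, $\inhom{\cech{V_1}}{Y}\cong C^\infty(A)\times C^\infty(B)$ (the arrow components of a functor to a pair groupoid are forced and unconstrained), and the induced map is $f\mapsto (f|_A,f|_B)$, whose image is the proper closed subspace of pairs agreeing on $A\cap B$; it is not open, hence not a submersion. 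Unless the direction of the refinement-induced map is being read differently, this shows that no assembly of levelwise submersion data of the kind you have collected can suffice: a correct argument must exploit finer structure — which comparison squares between the two limit diagrams are actually \emph{pullbacks}, the coskeletality of $NX$, and the particular form of the refinements $V_{12}\subset V_1\times_M V_2$ used in the construction — and you should expect the refinement direction (not the post-composition direction) to be where the real work, and possibly an additional hypothesis, is needed.
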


We will consider the special cases that the functor $X\to Y$ is the identity, and also that the refinement $V_1\to V_2$ is the identity.

\begin{theorem}
	For a Lie groupoid $X$ and compact manifold $M$, $\Map(M, X)$ is a Fr\'echet--Lie groupoid.
\end{theorem}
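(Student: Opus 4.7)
The plan is to verify the three defining properties of a Fréchet--Lie groupoid: both $\Map(M,X)_0$ and $\Map(M,X)_1$ are Fréchet manifolds, the source and target maps are submersions in the strong Fréchet sense (locally the projection off a closed direct summand), and the remaining structure maps (identity, inversion, composition) are smooth. Since both spaces are disjoint unions indexed by (pairs of) minimal covers, I would argue one summand at a time.

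Proposition \ref{prop:space_of_functors_is_mfld} makes each summand $\inhom{\vC(V)}{X}$ of $\Map(M,X)_0$ into a Fréchet manifold, so the object space is a Fréchet manifold. For the arrow space, fix minimal covers $V_1,V_2$ and the chosen refinement $V_{12}$. All four maps in \eqref{eq:mapping_gpd_construction_maps} are Fréchet submersions by Lemma \ref{lemma:induced_maps_between_hom_spaces_submersions}: for the restriction maps $\inhom{\cech{V_i}}{X}\to\inhom{\cech{V_{12}}}{X}$ one takes the identity functor and the refinement $V_{12}\subset V_i$, while for $S,T\colon\inhom{\cech{V_{12}}}{(X^\mathbbm{2})}\to\inhom{\cech{V_{12}}}{X}$ one takes the identity refinement and the functor $X^\mathbbm{2}\to X$, whose object and arrow components are submersions because source and target of the Lie groupoid $X$ are. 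Pullbacks of submersions in the category of Fréchet manifolds exist and are themselves submersions, so the iterated pullback that defines the $(V_1,V_2)$-summand of $\Map(M,X)_1$ is a Fréchet manifold.

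The source and target maps of $\Map(M,X)$, restricted to the $(V_1,V_2)$-summand, are precisely the pullback projections \eqref{eq:mapping_gpd_source_target} to $\inhom{\cech{V_i}}{X}$ followed by the inclusion of this space as a summand of $\Map(M,X)_0$. A projection in a pullback square along a submersion is a submersion, and summand inclusions are local diffeomorphisms, so $S$ and $T$ are submersions of Fréchet manifolds in the required strong sense. The identity map sends a functor $\phi\colon\cech{V}\to X$ to the triple $(\phi,\,u\circ\phi,\,\phi)$, where $u\colon X\to X^\mathbbm{2}$ is the identity-arrow functor; inversion is induced by the involution on $X^\mathbbm{2}$ that swaps source and target. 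Both are smooth since postcomposition with a smooth functor between finite-dimensional Lie groupoids induces a smooth map on the diffeological mapping spaces, which under the identifications above are Fréchet manifolds.

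The main technical point is smoothness of composition, adapting the composition of anafunctor transformations from \cite{Roberts_12}. Given composable arrows over cover-pairs $(V_1,V_2)$ and $(V_2,V_3)$, one forms a joint minimal refinement $V_{123}$ of $V_{12}$ and $V_{23}$ using the boolean product on regular closed sets, restricts both triples along $V_{123}$, combines them by pointwise composition in $X^\mathbbm{2}$ (which is smooth because composition in $X$ is), and repackages the outcome as an arrow in the $(V_1,V_3)$-summand. Each step is either a pullback projection along a Fréchet submersion supplied by Lemma \ref{lemma:induced_maps_between_hom_spaces_submersions} or postcomposition with a smooth functor, so smoothness follows. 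I expect the hardest part to be purely combinatorial: choosing $V_{123}$ naturally enough that the associativity and unit axioms hold on the nose, and checking that the resulting smooth assembly does land in the correct summand. The analytic content, however, is entirely subsumed by Proposition \ref{restriction_is_subm}, Theorem \ref{Staceys_thm} and Lemma \ref{lemma:induced_maps_between_hom_spaces_submersions}.
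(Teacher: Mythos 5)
Your proposal is correct and follows essentially the same route as the paper: the object space is a Fr\'echet manifold by Proposition \ref{prop:space_of_functors_is_mfld}, and the arrow space is an iterated pullback along the maps (\ref{eq:mapping_gpd_construction_maps}), which are submersions by the two special cases of Lemma \ref{lemma:induced_maps_between_hom_spaces_submersions} exactly as you describe. The only difference is that where you construct composition by hand from refinements and postcomposition, the paper dispatches smoothness of the identity and composition maps more briefly by noting they are smooth maps of diffeological spaces between spaces that happen to be Fr\'echet manifolds, and hence smooth in the Fr\'echet sense by full faithfulness of the embedding.
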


\begin{proof}
The object space $\Map(M, X)_0$ is a manifold by Proposition \ref{prop:space_of_functors_is_mfld}.
The arrow space $\Map(M,X)_1$ is a manifold since it is given by a pullback diagram built with the maps (\ref{eq:mapping_gpd_construction_maps}), which are submersions by Lemma \ref{lemma:induced_maps_between_hom_spaces_submersions}. 
The identity map is smooth, as it is a smooth map between diffeological spaces that happen to be manifolds, and so is composition. 
\end{proof}

The following theorem is the first main result of the paper. The proof uses the technique of \cite[Theorem 4.2]{Noohi_10} as adapted in \cite{Roberts-Vozzo}, where it is shown that all of the constructions remain smooth.

\begin{theorem}

	For a Lie groupoid $X$ and compact manifold $M$, the stack $\cHom(M,\cX)$ is weakly presented by the Fr\'echet--Lie groupoid $\Map(M,X)$. \qed

\end{theorem}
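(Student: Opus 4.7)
The plan is to adapt the argument of \cite[Theorem 4.2]{Noohi_10} in the smooth setting, following the adaptation carried out in \cite{Roberts-Vozzo}, and to verify that each geometric construction remains within the Fr\'echet category. The first theorem of the excerpt already presents $\cHom(M,\cX)$ by the diffeological groupoid $\Map(M,X)$, and the preceding theorem upgrades this to a Fr\'echet--Lie groupoid. What must be checked in order to upgrade ``presented'' to ``weakly presented'' is that the natural atlas map $\Map(M,X)_0 \to \cHom(M,\cX)$ is smooth and representable, and that the induced comparison between $\Map(M,X)$ and the \v{C}ech groupoid of this atlas is a smooth weak equivalence of Fr\'echet--Lie groupoids.

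First I would construct the atlas: an object of $\Map(M,X)_0$ over a minimal cover $V$ is a functor $\vC(V) \to X$ which, pre-composed with the canonical weak equivalence $\vC(V) \to M$, yields a morphism $M \to \cX$; this assembles naturally in any test manifold $N$ into a map of stacks $\Map(M,X)_0 \to \cHom(M,\cX)$. Next I would verify representability by computing the 2-fibre product $T \times_{\cHom(M,\cX)} \Map(M,X)_0$ for an arbitrary $T \to \cHom(M,\cX)$. As in \cite{Noohi_10}, this pullback is an iterated limit built out of spaces of the form $\inhom{\vC(V)}{X}$ together with the restriction and functor-postcomposition maps between them. By Proposition \ref{restriction_is_subm}, Theorem \ref{Staceys_thm}, and Lemma \ref{lemma:induced_maps_between_hom_spaces_submersions}, all maps in these diagrams are submersions of Fr\'echet manifolds, and hence the pullback is itself a Fr\'echet manifold with smooth projection to $T$.

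The main obstacle, as in \cite{Roberts-Vozzo}, is controlling the dependence on the choice of minimal cover: the arrow space of $\Map(M,X)$ is built by restricting a pair of functors over $V_1$ and $V_2$ to the boolean-product refinement $V_{12}$ and linking them by a natural transformation valued in $X^\mathbbm{2}$. One must verify that the comparison morphism from $\Map(M,X)$ to the \v{C}ech groupoid of the atlas is a smooth weak equivalence, that is, essentially surjective and fully faithful in the appropriate Fr\'echet sense. Essential surjectivity follows from the cofinality of minimal covers among all open covers, while full faithfulness reduces, via the pullback structure~(\ref{eq:mapping_gpd_source_target}), to Lemma \ref{lemma:induced_maps_between_hom_spaces_submersions} applied to the source and target maps $S,T\colon X^\mathbbm{2} \to X$. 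No essentially new geometric input is needed beyond the submersion results already established; all the work goes into organising the iterated pullback diagrams so that the Fr\'echet structure is preserved at every stage.
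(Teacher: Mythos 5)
Your proposal follows essentially the same route as the paper, which itself only sketches the argument by citing the technique of \cite[Theorem 4.2]{Noohi_10} as adapted in \cite{Roberts-Vozzo} and noting that all constructions remain smooth; your use of Proposition \ref{restriction_is_subm}, Theorem \ref{Staceys_thm} and Lemma \ref{lemma:induced_maps_between_hom_spaces_submersions} to keep every stage of the iterated pullbacks in the Fr\'echet category is exactly the intended content.

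One caution: you propose to verify representability of $T \times_{\cHom(M,\cX)} \Map(M,X)_0$ for an \emph{arbitrary} $T \to \cHom(M,\cX)$, which would establish an ordinary presentation. The theorem deliberately claims only a \emph{weak} presentation, defined in the paper as requiring representability of the self-pullback $\Map(M,X)_0 \times_{\cHom(M,\cX)} \Map(M,X)_0$ by $\Map(M,X)_1$ with submersion projections. The distinction is not cosmetic: over the site of Hausdorff manifolds, pullbacks against arbitrary $T$ need not be representable (the paper notes this is remedied either by passing to diffeological spaces, allowing non-Hausdorff manifolds, or assuming properness as in Theorem \ref{thm:mapping_orbifold}). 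So you should either restrict your representability check to the self-pullback, or be explicit that the general pullback is only a diffeological space or non-Hausdorff manifold. Apart from this overreach, the argument is the paper's.
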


A \emph{weak} presentation means that the pullback of the map $\Map(M,X)_0 \to \cHom(M,\cX)$ against itself gives a stack representable by the Fr\'echet manifold $\Map(M,X)_1$, and the two projections are submersions. 
For the site of diffeological spaces, a weak presentation is an ordinary presentation. 
This is also the case if we allow non-Hausdorff manifolds \cite[Proposition 2.2]{Behrend-Xu_11}, so we either have to pay the price of a weak presentation or working over a site of non-Hausdorff manifolds. 
If the groupoid $\Map(M,X)$ is \emph{proper}, as in Theorem \ref{thm:mapping_orbifold} below, then we can upgrade this weak presentation to an ordinary one over Hausdorff manifolds.

\begin{definition}\label{def:proper_etale}
	A (Fr\'echet--)Lie groupoid $Z$ is \emph{proper} if the map $(s,t)\colon Z_1 \to Z_0\times Z_0$ is a proper map (i.e.~closed with compact fibres), \emph{\'etale} if the source and target maps are local diffeomorphisms, and an \emph{orbifold groupoid} if it is a proper and \'etale.
\end{definition}

It is a theorem of Moerdijk--Pronk that \emph{effective} orbifold groupoids are equivalent to \emph{reduced} orbifolds defined in terms of orbifold charts---in finite dimensions (see e.g.~\cite{Pohl}). 
For infinite dimensional orbifolds this is not yet known, but may be possible for Fr\'echet--Lie groupoids with local additions on their object and arrow manifolds,\footnote{Alexander Schmeding, Private communication.} of which our construction is an example.

Our second main result is then:

\begin{theorem}\label{thm:mapping_orbifold}

	If $X$ is an \'etale Lie groupoid, then $\Map(M,X)$ is \'etale.
	If $X$ is an orbifold groupoid, then $\Map(M,X)$ is an orbifold groupoid.

\end{theorem}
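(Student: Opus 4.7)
The plan is to prove the two assertions in turn, working summand by summand in the disjoint union defining $\Map(M,X)_1$.

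For the étale statement, I fix a summand indexed by a pair $(V_1,V_2)$ of minimal covers with common refinement $V_{12}$, and construct a smooth local inverse to the source projection $(\phi_1,\theta,\phi_2)\mapsto\phi_1$ from a small open neighbourhood of a chosen point; the target projection is treated symmetrically. The construction proceeds in three steps. First, a small perturbation $\phi_1'$ of $\phi_1$ lifts uniquely to a perturbation $\theta_0'$ of the object component of $\theta$ via the equation $s\circ\theta_0' = \phi_1'|_{V_{12}}$: since $s\colon X_1\to X_0$ is a local diffeomorphism (because $X$ is étale), this lift is pointwise unique, and smoothness at the level of Fréchet mapping spaces follows by applying the smooth inverse function theorem through the iterated pullback description of $\inhom{\cech{V_{12}}}{X^\mathbbm{2}}$ from Proposition \ref{prop:space_of_functors_is_mfld}. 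Second, the arrow component of $\theta'$ and the complete data of $\phi_2'|_{V_{12}}$ are then determined by explicit smooth algebraic formulas in $\phi_1'$ and $\theta_0'$ coming from the commuting-square structure of $X^\mathbbm{2}$. Third, $\phi_2'|_{V_{12}}$ extends uniquely to $\phi_2'\in\inhom{\cech{V_2}}{X}$ by sheaf-theoretic gluing, since for each $j$ the collection $\{V^{(12)}_k : V^{(12)}_k\subset V^{(2)}_j\}$ covers $V^{(2)}_j$.

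For the proper statement, granted the étale part, it is enough to show that the anchor $(s,t)\colon \Map(M,X)_1 \to \Map(M,X)_0\times \Map(M,X)_0$ is closed with finite fibres. A fibre over $(\phi_1,\phi_2)$ consists of natural transformations $\theta$ with $S\theta = \phi_1|_{V_{12}}$ and $T\theta = \phi_2|_{V_{12}}$; the object component $\theta_0$ is a smooth section of the pullback of the proper étale map $(s,t)\colon X_1 \to X_0\times X_0$ along $(\phi_1|_{V_{12}},\phi_2|_{V_{12}})$ to the compact space $\coprod_k V^{(12)}_k$. Since this pullback is a finite covering space of a compact base, there are only finitely many such sections, and the arrow component of $\theta$ is then forced by $\theta_0$ together with the ambient cocycle data. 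Closedness of $(s,t)$ follows from a compactness argument: given a convergent sequence of source--target pairs, the finitely many discrete sheet choices stabilise on a subsequence, and properness of $(s,t)\colon X_1 \to X_0\times X_0$ together with compactness of $M$ delivers a limiting $\theta$ in $\Map(M,X)_1$.

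The main obstacle is verifying in the third step of the étale argument that existence of the extension of $\phi_2'|_{V_{12}}$ to $\vC(V_2)$ is preserved under small perturbations, i.e.\ that the condition for extendibility passes from the unperturbed to the perturbed data. This reduces to pointwise identities in $X_1$ of the form $\theta_0'(k',x)\cdot\phi_1'(\alpha_{kk'},x) = \theta_0'(k,x)$: both sides lie in the locally discrete fibre of the étale source map $s\colon X_1\to X_0$ over $\phi_1'(k,x)$, and both are close to $\theta_0(k,x)$ in the unperturbed situation, so the local injectivity of $s$ forces equality on a small neighbourhood. Upgrading this pointwise rigidity to smooth rigidity at the level of Fréchet mapping spaces again leans on the iterated pullback structure from Proposition \ref{prop:space_of_functors_is_mfld}.
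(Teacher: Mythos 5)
Your overall strategy is recognisably different from the paper's on the \'etale half, and essentially the same on the properness half. The paper does not construct local inverses at all: it observes that the source and target of $\Map(M,X)_1$ are already known to be submersions of Fr\'echet manifolds (in the strong sense of being locally a projection onto a direct summand), reduces via stability under pullback to the maps $\inhom{\cech{V}}{S},\inhom{\cech{V}}{T}\colon \inhom{\cech{V}}{(X^\mathbbm{2})}\to\inhom{\cech{V}}{X}$, shows these have \emph{discrete} fibres when $X$ is \'etale, and concludes because a submersion with discrete fibres is automatically a local diffeomorphism. This one observation absorbs both of the points you have to sweat over: local surjectivity (your step~3, the extendibility of $\phi_2'|_{V_{12}}$ to $\vC(V_2)$ under perturbation) comes for free from the submersion normal form, and local injectivity comes from discreteness of the fibre, using that restriction along $V_{12}\to V_2$ is injective on functors so the fibre of the source projection embeds in the fibre of $\inhom{\cech{V_{12}}}{S}$.

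The one step of your \'etale argument that, as written, would fail is the appeal to ``the smooth inverse function theorem'' at the level of Fr\'echet mapping spaces: there is no inverse function theorem for general Fr\'echet manifolds, so you cannot invert a map of spaces like $\inhom{\cech{V_{12}}}{(X^\mathbbm{2})}$ by citing such a theorem. Your construction can be repaired without it---the assignment $\phi_1'\mapsto\theta_0'$ is post-composition of $\phi_1'|_{V_{12}}$ with finitely many fixed finite-dimensional local inverses of $s\colon X_1\to X_0$ (patched over the compact sets $V^{(12)}_k$ using local injectivity of $s$), and post-composition with a fixed smooth map is smooth on mapping spaces---but the cleaner fix is simply to use the already-established submersion property as the paper does. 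On properness, your plan (finite fibres via sections of the pullback of $(s,t)\colon X_1\to X_0\times X_0$, closedness via a sequential compactness argument extracting a convergent subsequence of natural transformations) matches the paper's. One caveat: the pullback of $(s,t)$ along $(\phi_1|_{V_{12}},\phi_2|_{V_{12}})$ is proper with finite discrete fibres but need not be a covering space (fibre cardinality can jump), so you should derive finiteness of the section set from rigidity of sections of \'etale maps over the compact, finitely-many-component base rather than from covering space theory; the paper phrases this as the fibre being discrete and a subspace of a finite diffeological space.
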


\begin{proof} Stability of local diffeomorphisms under pullback mean that we only need to show that the smooth maps $\inhom{\cech{V}}{S},\inhom{\cech{V}}{T}\colon\inhom{\cech{V}}{(X^\mathbbm{2})} \to \inhom{\cech{V}}{X}$, for any minimal cover $V$, are local diffeomorphisms.
If $X$ is an \'etale Lie groupoid then the fibres of its source and target maps are discrete, and one can show that $\inhom{\cech{V}}{S},\inhom{\cech{V}}{T}$ have discrete diffeological spaces as fibres. 
But these maps are submersions of Fr\'echet manifolds, hence are local diffeomorphisms.

Properness follows if we can show that $(s,t)$ for the mapping groupoid is closed and every object has a finite automorphism group. 
This reduces to showing that $\inhom{\cech{V}}{(S,T)}$ is closed and its fibres are finite.
We can show the latter by again working in the diffeological category and showing that the fibres of $\inhom{\cech{V}}{(S,T)}$ are discrete, and also a subspace of a finite diffeological space. 
As all the spaces involved are metrisable, we use a sequential characterisation of closedness together with the local structure of the proper \'etale groupoid $X$, and find an appropriate convergent subsequence in the required space of natural transformations. 
\end{proof}

% \bibliographystyle{amsalpha}
% \bibliography{../Refs}

\begin{thebibliography}{CPR{\etalchar{+}}15}

\bibitem[AS17]{Amiri-Schmeding}
Habib Amiri and Alexander Schmeding, \emph{A differentiable monoid of smooth
  maps on {L}ie groupoids}, preprint,
  \href{https://arxiv.org/abs/1706.04816}{\texttt{arXiv:1706.04816}}, 2017.

\bibitem[BB13]{Borzellino_Brunsden}
Joseph~E.\ Borzellino and Victor Brunsden, \emph{The stratified structure of
  spaces of smooth orbifold mappings}, Comm.\ Contemp.\ Math. \textbf{15}
  (2013), no.~5, 1350018, 37,
  \href{http://arxiv.org/abs/0810.1070}{\texttt{arXiv:0810.1070}}.

\bibitem[BH11]{Baez-Hoffnung_11}
John~C. Baez and Alex Hoffnung, \emph{Convenient categories of smooth spaces},
  Trans. Amer. Math. Soc. \textbf{363} (2011), no.~11, 5789--5825,
  \href{http://arxiv.org/abs/0807.1704}{\texttt{arXiv:0807.1704}}.

\bibitem[BX11]{Behrend-Xu_11}
Kai Behrend and Ping Xu, \emph{Differentiable stacks and gerbes}, J.~Symplectic
  Geom. \textbf{9} (2011), no.~3, 285--341,
  \href{http://arxiv.org/abs/math/0605694}{\texttt{arXiv:math/0605694}}.

\bibitem[Che06]{Chen_06}
Weimin Chen, \emph{On a notion of maps between orbifolds {I}. {F}unction
  spaces}, Comm.\ Contemp.\ Math. \textbf{8} (2006), no.~5, 569--620,
  \href{http://arxiv.org/abs/math/0603671}{\texttt{arXiv:math/0603671}}.

\bibitem[CPR{\etalchar{+}}15]{CPRST_15}
Vesta Coufal, Dorette Pronk, Carmen Rovi, Laura Scull, and Courtney Thatcher,
  \emph{Orbispaces and their mapping spaces via groupoids: a categorical
  approach}, Contemporary Mathematics \textbf{641} (2015), 135--166,
  \href{https://arxiv.org/abs/1401.4772}{\texttt{arXiv:1401.4772}}.

\bibitem[Fre07]{Frerick}
Leonhard Frerick, \emph{Extension operators for spaces of infinite
  differentiable {Whitney} jets}, J.~Reine Angew.\ Math. \textbf{602} (2007),
  123--154.

\bibitem[GH06]{GH_06}
K.~Guruprasad and A.~Haefliger, \emph{Closed geodesics on orbifolds}, Topology
  \textbf{45} (2006), 611--641,
  \href{https://arxiv.org/abs/math/0306238}{\texttt{arXiv:math/0306238}}.

\bibitem[Hae07]{Haefliger}
Andre Haefliger, \emph{On the space of morphisms between \'etale groupoids},
  Notes for a talk at the ``Groupoids and Stacks in Physics and Geometry''
  trimester, Institut Henri Poincar\'e. Available as
  \href{http://arxiv.org/abs/0707.4673}{\texttt{arXiv:0707.4673}}, 2007.

\bibitem[Noo10]{Noohi_10}
Behrang Noohi, \emph{Mapping stacks of topological stacks}, J.\ Reine Angew.\
  Math. \textbf{646} (2010), 117--133,
  \href{http://arxiv.org/abs/0809.2373}{\texttt{arXiv:0809.2373}}.

\bibitem[Poh17]{Pohl}
Anke~D.\ Pohl, \emph{The category of reduced orbifolds in local charts}, J.
  Math. Soc. Japan, to appear \textbf{69} (2017), no.~2, 755--800, Available
  from \href{http://arxiv.org/abs/1001.0668}{\texttt{arXiv:1001.0668}}.
  \MR{3638284}

\bibitem[PS]{Pronk-Scull}
Dorette Pronk and Laura Scull, \emph{Orbifold mapping spaces as pseudo
  colimits}, in progress.

\bibitem[Rob12]{Roberts_12}
David~Michael Roberts, \emph{Internal categories, anafunctors and
  localisation}, Theory Appl.\ Categ. \textbf{26} (2012), no.~29, 788--829,
  \href{http://arxiv.org/abs/1101.2363}{\texttt{arXiv:1101.2363}}.

\bibitem[RV16]{Roberts-Vozzo}
David~Michael Roberts and Raymond~F. Vozzo, \emph{Smooth loop stacks of
  differentiable stacks and gerbes}, preprint,
  \href{http://arxiv.org/abs/1602.07973}{\texttt{arXiv:1602.07973}}, 2016.

\bibitem[Sch15]{Schmeding_15}
A.~Schmeding, \emph{The diffeomorphism group of a non-compact orbifold},
  Dissertationes Math.\ (Rozprawy Mat.) \textbf{507} (2015), 179pp,
  \href{https://arxiv.org/abs/1301.5551}{\texttt{arXiv:1301.5551}}.

\bibitem[Sta13]{Stacey_13}
Andrew Stacey, \emph{Yet more smooth mapping spaces and their smoothly local
  properties}, preprint,
  \href{http://arxiv.org/abs/1301.5493}{\texttt{arXiv:1301.5493}}, 2013.

\bibitem[Wei07]{Weinmann}
Thomas Weinmann, \emph{Orbifolds in the framework of {Lie} groupoids}, Ph.D.
  thesis, ETH Z\"urich, 2007.

\end{thebibliography}

\newcommand{\etalchar}[1]{$^{#1}$}
\providecommand{\bysame}{\leavevmode\hbox to3em{\hrulefill}\thinspace}
\providecommand{\MR}{\relax\ifhmode\unskip\space\fi MR }
% \MRhref is called by the amsart/book/proc definition of \MR.
\providecommand{\MRhref}[2]{%
  \href{http://www.ams.org/mathscinet-getitem?mr=#1}{#2}
}
\providecommand{\href}[2]{#2}

\end{document}